\documentclass[preprint,11pt]{elsarticle}
\makeatletter
\def\ps@pprintTitle{%
 \let\@oddhead\@empty
 \let\@evenhead\@empty
 \def\@oddfoot{\centerline{\thepage}}%
 \let\@evenfoot\@oddfoot}
\makeatother
\usepackage{hyperref}
\usepackage{lineno}
\usepackage{isomath}
\usepackage{bm}
\usepackage{setspace}
\usepackage{url}
\usepackage{breakurl}

\usepackage{soul}
\usepackage{graphics}
\usepackage{psfig}
\usepackage{rotating}
\usepackage[absolute]{textpos}
\usepackage{color}
\usepackage{lineno}
\usepackage[utf8]{inputenc}
\usepackage{amsmath}
\usepackage{amsfonts}
\usepackage{amssymb}
\usepackage{amsthm}
\usepackage{mathrsfs}
\usepackage{latexsym}
\usepackage{multirow}
\usepackage{rotating}
\usepackage{caption}
\usepackage{graphicx}
\usepackage{float}
\usepackage{array}
\usepackage{subfigure}
\usepackage{tabularx}
\usepackage{graphicx,psfig}
\usepackage{footnote}
\usepackage[titletoc,toc,title]{appendix}
\usepackage{colortbl}
\usepackage[dvipsnames]{xcolor}
\usepackage{hyperref}
\usepackage{natbib}
\usepackage{amssymb}
\usepackage{soul}
\usepackage{algorithmic}
\usepackage{algorithm}
\usepackage{epsfig}
\usepackage{relsize}
\usepackage{color}
\usepackage{mathtools}
\usepackage{multirow}
\usepackage[utf8]{inputenc}
\usepackage[english]{babel}

\newtheorem{theorem}{Theorem}
\newtheorem{lemma}{Lemma}

\newtheorem{prop}{Proposition}
\newtheorem*{remark}{Remark}

\allowdisplaybreaks
 \journal{Statistics \& Probability Letters}

\begin{document}
\begin{frontmatter}

\title{A nonparametric ensemble binary classifier and its statistical properties}
\author{Tanujit Chakraborty\footnote[1]{\textit{Corresponding author}:
Tanujit Chakraborty (tanujit\_r@isical.ac.in)}, Ashis Kumar Chakraborty\textsuperscript{2} and C.A.
Murthy\textsuperscript{3}\\
    {\scriptsize \textsuperscript{1 and 2} SQC and OR Unit, Indian Statistical Institute, 203, B. T. Road, Kolkata - 700108, India}\\
    {\scriptsize \textsuperscript{3}Machine Intelligence Unit, Indian Statistical Institute, 203, B. T. Road, Kolkata - 700108, India}}
\begin{abstract}
In this work, we propose an ensemble of classification trees (CT)
and artificial neural networks (ANN). Several statistical properties
including universal consistency and upper bound of an important
parameter of the proposed classifier are shown. Numerical evidence
is also provided using various real life data sets to assess the
performance of the model. Our proposed nonparametric ensemble
classifier doesn't suffer from the ``curse of dimensionality'' and
can be used in a wide variety of feature selection cum
classification problems. Performance of the proposed model is quite
better when compared to many other state-of-the-art models used for
similar situations.
\end{abstract}

\begin{keyword}
Classification trees, feature selection, neural networks, hybrid
model, consistency, medical data sets.
\end{keyword}
\end{frontmatter}

\section{Introduction} \label{Introduction}
Distribution-free classification models are specially used in the
fields of statistics and machine learning since more than forty
years now, mainly for their accuracy and ability to deal with high
dimensional features and complex data structures. Two such models
are CT and ANN. Both have the ability to model arbitrary decision
boundaries. CT is found to be robust when limited data are available
unlike ANN. But decision trees are high variance estimators and
greedy in nature \cite{breiman2017classification} whereas neural
networks are universal approximators \cite{hornik1989multilayer}.
More powerful ANN has many free tuning parameters and risk of
over-fitting for small data sets. To utilize the positive aspects of
these two powerful models, theoretical frameworks for combining both
classifiers are often used jointly to make decisions. The ultimate
goal of designing classification models is to achieve best possible
performance in terms of accuracy measures for the task at hand. This
objective led researchers to create efficient hybrid models and
prove their statistical properties to make their best use. Mapping
tree based models to neural networks allows exploiting the former to
initialize the latter. Parameter optimization within ANN framework
will yield a model that is intermediate between CT and ANN as found
in some literatures \cite{sethi1990entropy, sakar1993growing}. Tsai
neural tree model \cite{tsai2012decision} uses the idea of splitting
the parameter space into areas by CT and builds in each of the areas
a locally specialized ANN model \cite{sirat1990neural}. In deep
neural tree model \cite{yang2018deep}, a decision tree provides the
final prediction and it differs from conventional CT by introducing
a global optimization of split and leaf node parameters using ANN.
But the major disadvantages of these algorithms are slow training,
having many tuning parameters, easy sticking on local minima and
poor robustness \cite{tsai2012decision}. All these hybrid models are
empirically shown to be useful in solving real life problems, but
the theoretical results are yet to be proved for many of them.

On the theoretical side, the literature is less conclusive, and
regardless of their use in practical problems of classification,
little is known about the statistical properties of these models.
The most celebrated theoretical result has given the general
sufficient conditions for almost-sure $L_{1}$-consistency of
histogram-based classification and data-driven density estimates
\cite{lugosi1996consistency}. In case of neural networks, it is
theoretically proven that if a one hidden layered (1HL) neural
network is trained with appropriately chosen number of neurons to
minimize the empirical risk on the training data, then it results in
a universally consistent classifier \cite{farago1993strong,
lugosi1995nonparametric}. Devroye et al.
\cite{devroye2013probabilistic} have theoretically shown that there
is some gain in considering two hidden layers (2HL), but it is not
really necessary to go beyond 2HL in ANN. In case of hybrid models,
the asymptotic results are less explored in the literature other
than a few notable works on neural decision trees
\cite{balestriero2017neural} and neural random forests
\cite{biau2016neural}. So there still exists a gap between theory
and practice when different hybrid models are considered.

Motivated by the above discussion, we have proposed in the present
paper an ensemble CT-ANN model which is an extension of our previous
work on hybrid CT-ANN model \cite{chakraborty2018novel}. Harnessing
the ensemble CT-ANN formulation, we try to exploit the strengths of
CT and ANN models and overcome their drawbacks. The approach is
mainly developed in theoretical details. Latter different training
schemes are experimentally evaluated on various small and medium
sized medical data sets having high dimensional feature spaces. The
model is found to be efficient for feature selection cum
classification task. We have established the consistency results and
upper bound for the model parameter of ensemble CT-ANN model which
assures a basic theoretical guarantee of efficiency of the proposed
algorithm. In our model, we have used CT as a feature selection
algorithm \cite{breiman2017classification} and have justified that
CT output plays an important role in further model building using
ANN algorithm. The proposed ensemble CT-ANN model has the advantages
of significant accuracy and very less number of tuning parameters.
Another major advantage of the proposed algorithm is its
interpretability as compared to more ``black-box-like" advanced
neural networks. Besides having the ability to deal with small and
medium sized data sets, our model is useful for selection of
important features and performing classification tasks in
high-dimensional feature spaces and complex data structures.

The paper is organized as follows. In section 2, we introduce
ensemble CT-ANN model. The main theoretical results are presented in
section 3 and application on various real life data sets are shown
in section 4. Section 5 is fully devoted to the conclusions and
future scope for research.

\section{Proposed Model} \label{Proposed_Model}
CT is a nonparametric classification algorithm which has a built-in
mechanism to perform feature selection \cite{quinlan1993c4}. Unlike
many other classification models, CT doesn't have any strong
assumption about normality of the data and homoscedasticity of the
noise terms. In our proposed model, we first split the feature space
into areas by CT algorithm. Most important features are chosen using
CT and redundant features are eliminated. Then we build ANN model
using the important variables obtained through CT algorithm along
with prediction results made by CT algorithm which is used as an
additional input feature in the neural networks. Then ANN model is
applied with one (hidden) layer to get the final classification
results. The optimum value of number of neurons in the hidden layer
is derived in Section 3. Since, we have taken CT output as an input
feature in ANN model, the number of hidden layer is chosen to be
one. We have also shown the proposed model to be universally
consistent in Section 3. The effectiveness of ensemble CT-ANN model
lies in the selection of important features using CT model and also
incorporating CT predicted class levels as a feature in ANN model.
It is noted that the inclusion of CT output as an input feature
increases the dimensionality of feature space that results in better
class separability as well. The theoretical set-up is presented in
Section 3 which gives robustness and statistical aspects of the
proposed model. The informal work-flow of the proposed model is as
follows:
\begin{itemize}
\item  First, apply CT algorithm to train and build a decision tree and record important features.
\item  The prediction results of CT algorithm is also applied as an additional feature for further modeling.
\item  Using important input variables obtained from CT along with an additional input variable (CT output), a neural network is generated.
\item  Run one hidden-layered ANN algorithm with sigmoid activation function and record the classification
results.
\item  The optimum number of neurons in the hidden layer of the model to be chosen as $O\bigg(\sqrt{\frac{n}{d_{m}log(n)}}\bigg)$ [discussed in Section 3], where $n,d_{m}$ are number of training samples and number
of input features in ANN model, respectively.
\end{itemize}
Our proposed model can be used for feature selection cum complex
classification problems. On the theoretical side, it is necessary to
show the universal consistency of the proposed model and other
statistical properties for its robustness. We will now introduce a
set of regularity conditions to show the consistency of feature
selection algorithm (CT) and the role of CT output in the proposed
model. Finally consistency of the proposed model and optimal number
of neurons in hidden layer will be shown in Section 3. Analogous
model for regression problems and related results are addressed in
\cite{chakraborty2018a}. A flowchart of ensemble CT-ANN model is
presented in Figure 1.

\begin{figure}[t]
\centering
\includegraphics[scale=0.40]{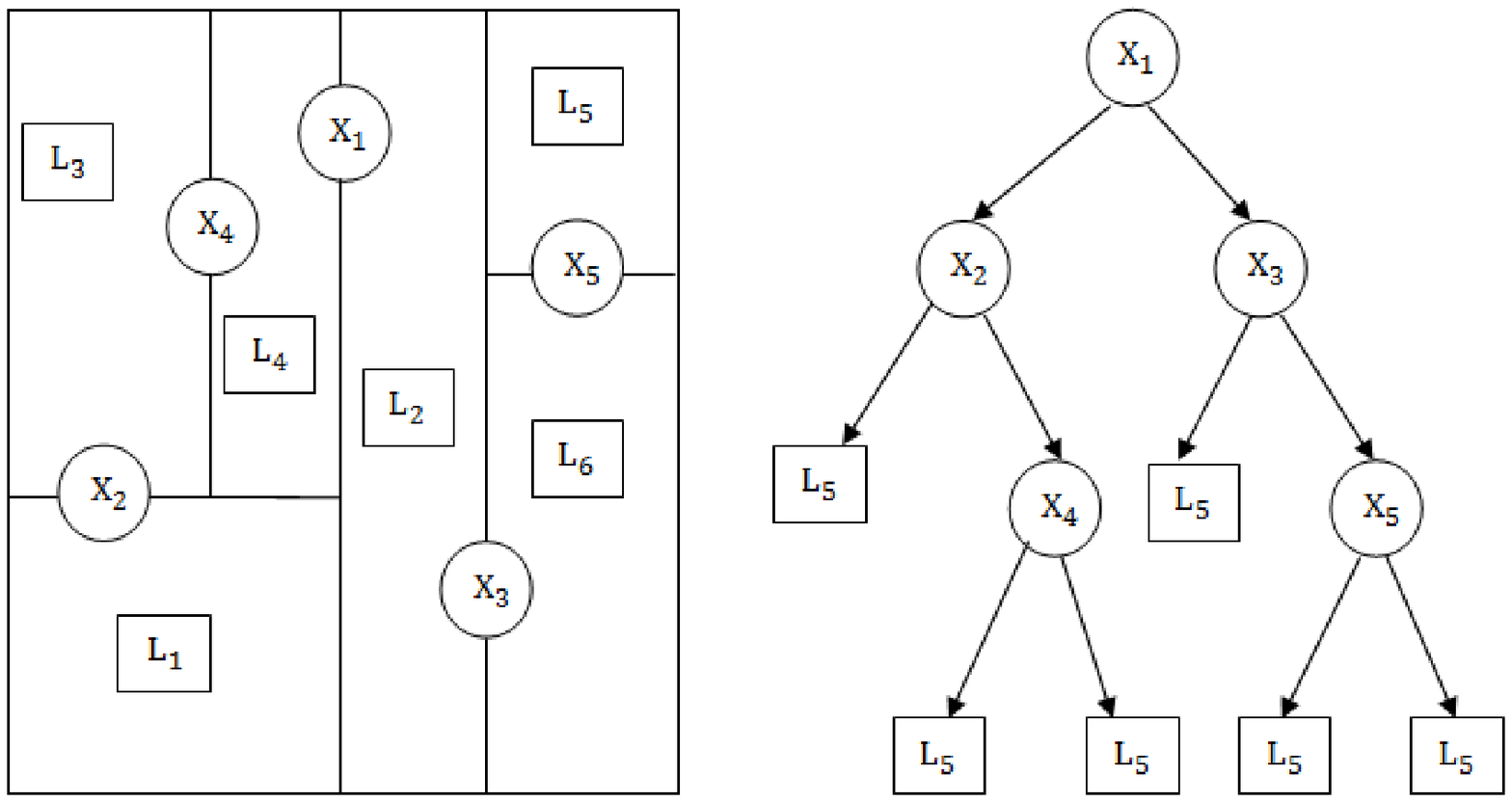}
\includegraphics[scale=0.40]{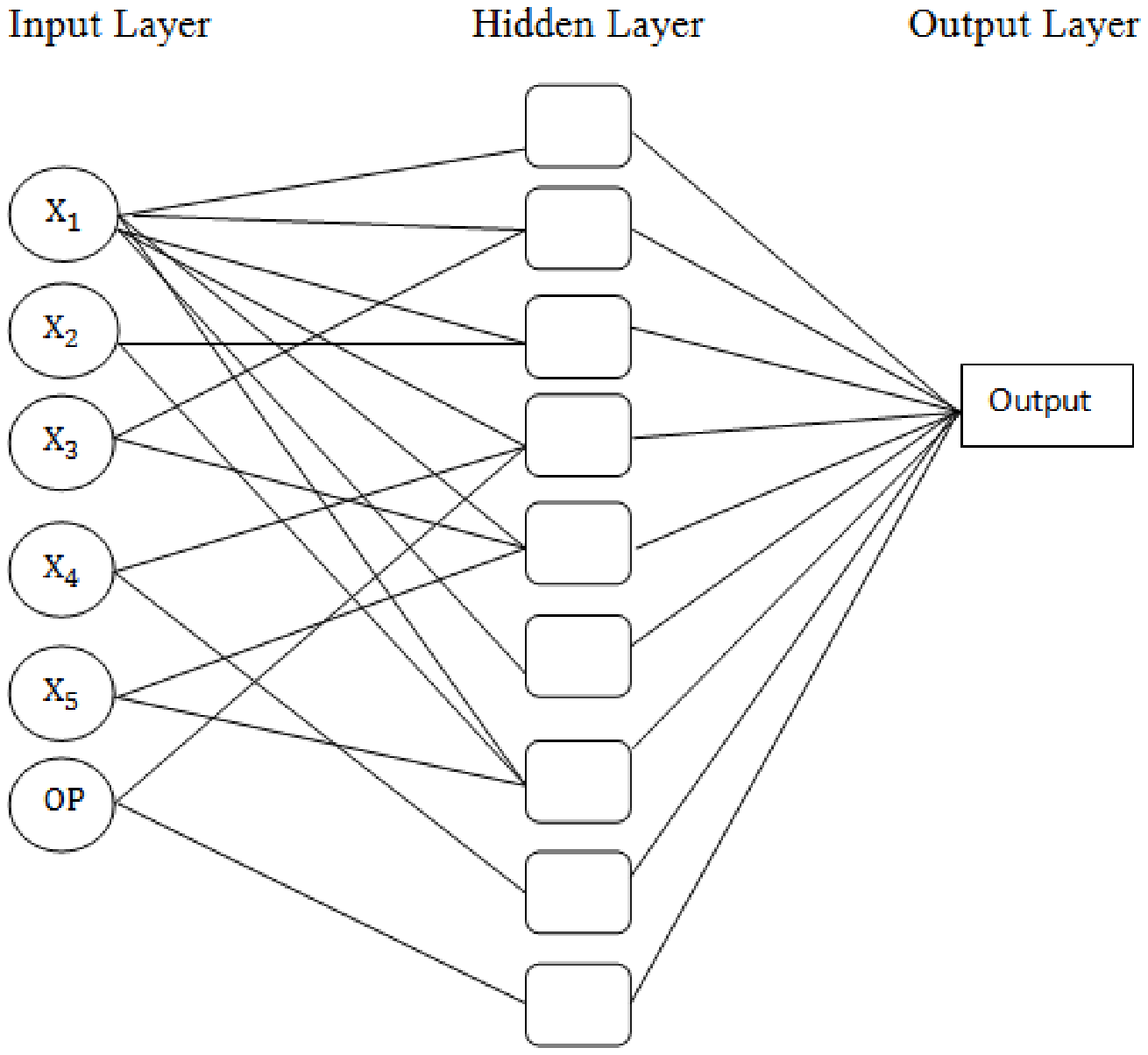}
\caption{An example of ensemble CT-ANN model with $X_{i}$, where
$i=1,2,3,4,5$, as important features obtained using CT, $L_{i}$ be
the leaf nodes and OP as CT output.} \label{figmodRWNGT}
\end{figure}

\section{Statistical Properties of the Proposed Model}  \label{Statistical Properties}
Our proposed ensemble classifier has the following nomenclature:
first, it extracts important features from the feature space using
CT algorithm, then it builds one hidden layered ANN model with the
important features extracted using CT along with CT outputs as an
additional feature. We investigate the statistical properties of the
proposed ensemble CT-ANN model by introducing a set of regularity
conditions for consistency of CT followed by the importance of CT
outputs for further model building. And finally we will discuss the
consistency results of ANN algorithm
with optimal value of number of neurons in the hidden layer of the model.\\

Let $\underline{X}$ be the space of all possible values of $p$
features and $C$ be the set of all possible binary outcomes. We are
given a training sample with $n$ observations $L=\{(X_{1},C_{1}),
(X_{2},C_{2}),...,(X_{n},C_{n})\}$, where
$X_{i}=(X_{i1},X_{i2},...,X_{ip}) \in \underline{X}$ and $C_{i} \in
C$. Also let $\Omega=\{\omega_{1},\omega_{2},...,\omega_{k}\}$ be a
partition of the feature space $\underline{X}$. We denote
$\widetilde{\Omega}$ as one such partition of $\Omega$. Define
$L_{\omega_{i}}=\{(X_{i},C_{i})\in L: X_{i}\in \omega_{i}, C_{i}\in
C\}$ as the subset of $L$ induced by $\omega_{i}$ and let
$L_{\widetilde{\Omega}}$ denote the partition of $L$ induced by
$\widetilde{\Omega}$. The information gain (to be introduced later)
from the feature space is used to partition $\underline{X}$ into a
set $\widetilde{\Omega}$ of nodes and then we can construct a
classification function on $\widetilde{\Omega}$. There exists a
partitioning classification function $d:\widetilde{\Omega}
\rightarrow C$ such that $d$ is constant on every node of
$\widetilde{\Omega}$. Now, let us define $\widehat{L}$ to be the
space of all learning samples and $\mathbb{D}$ be the space of all
partitioning classification function, then $\Phi:\widehat{L}
\rightarrow \mathbb{D}$ such that $\Phi(L)=(\psi \circ \phi)(L)$,
where $\phi$ maps $L$ to some induced partition
$(L)_{\widetilde{\Omega}}$ and $\psi$ is an assigning rule which
maps $(L)_{\widetilde{\Omega}}$ to $d$ on the partition
$\widetilde{\Omega}$. The most basic reasonable assigning rule
$\psi$ is the plurality rule $\psi_{pl}(L_{\widetilde{\Omega}})=d$
such that if $x \in \omega_{i}$, then
\[
d(\underline{x})=\arg \max_{c \in C}|L_{c,\omega_{i}}|
\]
The plurality rule is used to classify each new point in
$\omega_{i}$ as belonging to the class (either 0 or 1 in this case)
most common in $L_{\omega_{i}}$. This rule is very important in
proving risk consistency of the CT algorithm. Now, let us define a
binary split function $s(\omega_{i})$, that maps one node to a pair
of nodes, i.e.,
$s(\omega_{i})=(s_{1}(\omega_{i}),s_{2}(\omega_{i}))=(\omega_{1},\omega_{2})$,
then $\omega_{1}\cup \omega_{2}=\omega_{i}$, $\omega_{1}\cap
\omega_{2}=\phi$ and $\omega_{1},\omega_{2}\neq \phi$. Binary split
function partitions a parent node $\omega_{i} \subseteq
\underline{X}$ into a non-empty child nodes $\omega_{1}$ and
$\omega_{2}$, called left child and right child node respectively.
A set of all potential rules that we will use to split $\underline{X}$ is a finite set $S=\{ s_{1},s_{2},....s_{m}\}$.\\
Define $\mathscr{G}$ as a goodness of split criterion which maps
$(\omega_{i},s)$ for all $\omega_{i} \in \underline{X}$ and $s \in
S$ to a real number. For any parent node $\omega_{i}$, the goodness
of split criterion ranks the split functions. We have used the
following impurity function is used as goodness of split criterion:
\[
\mathscr{G}(L_{\omega_{i}},s)=H(L_{\omega_{i}})-\frac{|L_{s_{1}(\omega_{i})}|}{|L_{\omega_{i}}|}H(L_{\omega_{1}(t)})-\frac{|L_{s_{2}(\omega_{i})}|}{|L_{\omega_{i}}|}H(L_{\omega_{2}(t)})
\]
where, $H$ is taken as Gini Index and can be written as follows:
\[
H_{gini}(\omega_{i})=\sum_{c\neq
c'}\frac{|L_{\omega_{i},c}|}{|L_{\omega_{i}}|}.\frac{|L_{\omega_{i},
c'}|}{|L_{\omega_{i}}|}
\]
This criterion assesses the quality of a split $s(\omega_{i})$ by
subtracting the average impurity of the child nodes
$\omega_{1},\omega_{2}$ from the impurity of the parent node
$\omega_{i}$. The stopping rule in CT is decided based on the
minimum number of split in the posterior sample called minsplit
($r(\omega_{i})$). If $r(\omega_{i})$ $\geq \alpha$, then
$\omega_{i}$ will split into two child nodes and if $r(\omega_{i})$
$< \alpha$, then $\omega_{i}$ is a leaf node and no more split is
required. Here $\alpha$ is determined by the user, though for
practice it is usually taken as 10\% of the training sample size.

A binary tree-based classification and partitioning scheme $\Phi$ is
defined as an assignment rule applied to the limit of a sequence of
induced partitions $\phi^{(i)}(L)$, where $\phi^{(i)}(L)$ is the
partition of the training sample $L$ induced by the partition
$(\phi_{i} \circ \phi_{i-1} \circ .... \circ
\phi_{1})(\underline{X})$. For every node $\omega_{i}$ in a
partition $\widetilde{\Omega}$ such that $r(\omega_{i})$ $\geq
\alpha$, then the function $\phi(\widetilde{\Omega})$ splits each
node into two child nodes using the binary split in the question set
as determined by $\mathscr{G}$. For other nodes $\omega_{i} \in
\widetilde{\Omega}$ such that $r(\omega_{i})$ $< \alpha$, then
$\phi(\widetilde{\Omega})$ leaves $\omega_{i}$ unchanged.
Mathematically,
\begin{eqnarray}
\Phi(L)=(\psi \circ \lim_{i\rightarrow \infty}\phi^{(i)})(L)
\end{eqnarray}
where, $\phi^{(i)}(L)=L_{(\phi_{i} \circ \phi_{i-1} \circ .... \circ
\phi_{1})(\underline{X})}$. \\

CT as an axis-parallel split on $\mathbb{R}^{p}$ splits a node by
dividing into child nodes consisting of either side of some
hyperplane. We need to show that CT scheme are well defined, which
will be possible only if there exists some induced partition $L^{'}$
such that $\lim_{i \rightarrow \infty} \phi^{(i)}(L)=L^{'}$. In fact
we need to show that the following lemma holds:

\begin{lemma}
If L is a training sample and $\phi^{(i)}$ is defined as above, then
there exists $N \in \mathbb{N} \quad \mbox{such that for} \quad n
\geq N$
\begin{eqnarray}
\phi^{(n)}(L)=\lim_{i \rightarrow\infty} \phi^{(i)}(L)
\end{eqnarray}
\end{lemma}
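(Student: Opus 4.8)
The plan is to exploit the finiteness of the training sample $L$ together with the fact that each application of $\phi$ can only \emph{refine} the current induced partition, never coarsen it. First I would record the structural description of one step: $\phi^{(i+1)}(L)$ is obtained from $\phi^{(i)}(L)$ by leaving untouched every cell $L_{\omega_i}$ with $r(\omega_i) < \alpha$ and replacing every cell with $r(\omega_i) \geq \alpha$ by the two cells $L_{s_1(\omega_i)}, L_{s_2(\omega_i)}$ determined by the split that maximises $\mathscr{G}$. Since the binary split function always returns two non-empty child nodes with $\omega_1 \cup \omega_2 = \omega_i$, $\omega_1 \cap \omega_2 = \phi$ and $\omega_1,\omega_2 \neq \phi$, each such replacement strictly increases the number of cells of the partition of $L$. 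Hence $\phi^{(i+1)}(L)$ is always a refinement of $\phi^{(i)}(L)$, and $|\phi^{(i+1)}(L)| \geq |\phi^{(i)}(L)|$ with equality precisely when no node was split, i.e. precisely when $\phi^{(i+1)}(L) = \phi^{(i)}(L)$.

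Next I would bound the number of cells. The cells of $\phi^{(i)}(L)$ are pairwise disjoint non-empty subsets of the finite set $L$, which has $n$ elements, so $|\phi^{(i)}(L)| \le n$ for every $i$. Therefore the integer sequence $a_i := |\phi^{(i)}(L)|$ is non-decreasing and bounded above by $n$, hence eventually constant: there exists $N \in \mathbb{N}$ with $a_n = a_N$ for all $n \ge N$.

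Finally, combining the two observations: for every $n \ge N$ we have $|\phi^{(n+1)}(L)| = |\phi^{(n)}(L)|$, and by the dichotomy above this forces $\phi^{(n+1)}(L) = \phi^{(n)}(L)$. A straightforward induction then yields $\phi^{(n)}(L) = \phi^{(N)}(L)$ for all $n \ge N$, so the sequence is ultimately constant and its limit is exactly $\phi^{(N)}(L)$, which establishes $\phi^{(n)}(L) = \lim_{i \rightarrow \infty}\phi^{(i)}(L)$ for all $n \ge N$.

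The only point requiring genuine care — the ``main obstacle'' — is justifying that a true split strictly increases the cell count and that $\phi$ never merges cells; this rests entirely on the stipulated properties of the split function and on the fact that the minsplit stopping rule $r(\omega_i)$ is evaluated node-by-node, so that once a node is declared a leaf it stays a leaf. Everything else is the elementary fact that a monotone integer sequence bounded by $n$ stabilises. One could alternatively argue termination directly from the stopping rule — each split consumes a node carrying at least $\alpha$ sample points while the total mass $n$ is finite, so the recursion cannot continue indefinitely — but the cardinality argument is cleaner and avoids bookkeeping on the recursion depth.
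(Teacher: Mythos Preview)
Your argument is correct and complete. It differs from the paper's proof in the choice of monotone quantity used to force termination. You track the \emph{number of cells} $a_i = |\phi^{(i)}(L)|$, which is non-decreasing and bounded above by $|L|$, hence eventually constant; the paper instead tracks $|\omega_i^{\max}|$, the size of the largest non-leaf cell in the current partition, and argues that this sequence is strictly decreasing (each simultaneous round of splits replaces every non-leaf node by strictly smaller children, so the new maximum non-leaf size drops by at least one) and bounded below by $1$, hence must vanish within $|L|$ steps. Both arguments rest on the same unstated but standard assumption that a split places at least one training point in each child, and both deliver the explicit bound $N \le |L|$. Your cell-count approach is the more direct of the two, since stability of the partition is exactly what you want to conclude and you measure it directly; the paper's decreasing-maximum argument is a small detour that reaches the same destination. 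The alternative you sketch at the end---bounding recursion depth via the total mass $n$ and the threshold $\alpha$---is closer in spirit to the paper's idea but neither you nor the paper pursues it.
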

\begin{proof}
Let $\{ L_{\widetilde{\Omega}} \}$ denote the sequence $\{  L,
\phi^{1}(L), \phi^{2}(L), ...  \}$. Defining
$\omega_{i}^{max}=\max{\{\omega_{i} \in \widetilde{\Omega}_{i}:
r(\omega_{i})
> \alpha \}}$ as the size of the largest non-leaf node(s) in
$\widetilde{\Omega}_{i}$. Suppose there exists $N \in \mathbb{N}$
such that $(\omega_{i})_{N}^{max}$ does not exist. Then every node
in $\widetilde{\Omega}_{N}$ is leaf. For all $n > N$,
$\widetilde{\Omega}_{n}=\widetilde{\Omega}_{N}$, then $(2)$ holds.
The sequence $\{ |\omega_{i}^{max}| \}$ is strictly decreasing if it
exists. Further if $\omega_{i+1}^{max}$ exists then
$|\omega_{i+1}^{max}| \leq |\omega_{i}^{max}| - 1$ and
$|\omega_{i}^{max}| \geq 1$ and $|\omega_{1}^{max}|=|L|$. This means
that $(\omega_{i})_{|L|}^{max}$ can not exist, so $(2)$ always holds
with $N \leq |L|$.
\end{proof}

For a wide range of partitioning schemes, the consistency of
histogram classification schemes based on data-dependent partitions
was shown in the literature \cite{lugosi1996consistency}.
To introduce the theorem, we need to define the following:\\

For any random variable X and set A, let
$\eta_{n,X}(A)=\frac{1}{n}\sum_{i=1}^{n}I(X_{i} \in A)$ be the
empirical probability that $X \in A$ based on $n$ observations and
$I(z)$ denotes the indicator of an event C. Now let $\mathcal{T}
=(\widetilde{\Omega}_{1},\widetilde{\Omega}_{2},...)$ be a finite
collection of partitions of a measurement space $\underline{X}$.
Define maximal node count of $\mathcal{T}$ as the maximum number of
nodes in any partition $\widetilde{\Omega}$ in $\mathcal{T}$ which
can be written as $\lambda(\mathcal{T})=\sup_{\widetilde{\Omega} \in
\mathcal{T}}|\widetilde{\Omega}|$. Also let, $\Delta (\mathcal{T},
L)=|\{ L_{\widetilde{\Omega}}: \widetilde{\Omega} \in \mathcal{T}|$
be the number of distinct partitions of a training sample of size
$n$ induced by partitions in $\mathcal{T}$. Let
$\Delta_{n}(\mathcal{T})$ be the growth function of $\mathcal{T}$
defined as $\Delta_{n}(\mathcal{T})=\sup_{\{L:|L|=n\}}
\Delta(\mathcal{T},L)$. Growth function of $\mathcal{T}$ is the
maximum number of distinct partitions $L_{\widetilde{\Omega}}$ which
partition $\widetilde{\Omega}$ in $\mathcal{T}$ can induce in any
training sample with $n$ observations. Take any class $\mathscr{A}
\subseteq \mathbb{R}^{p}$, $S_{n}(\mathscr{A})=\max_{\{B \subset
\mathbb{R}^{p}:|B|=n\}}|A \cap B : A \in \mathscr{A}|$ is the
maximum number of partitions of $B$ induced by $\mathscr{A}$, where
$B$ is some $n$ point subset of $\mathbb{R}^{p}$, called shatter
coefficient. For a partition $\widetilde{\Omega}$ of $X$, let
$\widetilde{\Omega}[x \in X]=\{ \omega \in \widetilde{\Omega}: x \in
\omega \}$ be the node $\omega$ in $\widetilde{\Omega}$ which
contains $x$. For a set $A \subseteq \mathbb{R}^{p}$, let
$D(A)=\sup_{x,y \in A}\parallel x-y \parallel$ be the diameter of
$A$. Now, for the sake of completeness we are rewriting the Theorem
2 of \cite{lugosi1996consistency} in our context:
\begin{theorem}
Let $(\underline{X},\underline{Y})$ be a random vector taking values
in $\mathbb{R}^{p} \times C$ and $L$ be the set of first n outcomes
of $(\underline{X},\underline{Y})$. Suppose that $\Phi$ is a
partition and classification scheme such that $\Phi(L)=(\psi_{pl}
\circ \phi)(L)$, where $\psi_{pl}$ is the plurality rule and
$\phi(L)=(L)_{\tilde{\Omega_{n}}}$ for some $\tilde{\Omega}_{n}\in
\mathcal{T}_{n}$, where
\[
\mathcal{T}_{n}=\{\phi(\ell_{n}): P(L=\ell_{n})>0\}.
\]
Also suppose that all the binary split functions in the question set
associated with $\Phi$ are hyperplane splits. As
$n\rightarrow\infty$, if the following regularity conditions hold:
\begin{equation}
\frac{\lambda(\mathcal{T}_{n})}{n}\rightarrow 0
\end{equation}
\begin{equation}
\frac{log(\triangle_{n}(\mathcal{T}_{n}))}{n}\rightarrow 0
\end{equation}
and for every $\gamma > 0$ and $\delta \in (0,1)$,
\begin{equation}
\inf_{S\subseteq \mathbb{R}^{p} : \eta_{x}(S) \geq 1-\delta}
\eta_{x}({x: diam(\tilde{\Omega}_{n}[x]\cap S) > \gamma})\rightarrow
0
\end{equation}
with probability 1. then $\Phi$ is risk consistent.
\end{theorem}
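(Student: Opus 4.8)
The plan is to exploit the fact that, for the binary problem with the plurality rule $\psi_{pl}$, the classifier $\Phi(L)$ coincides with the plug-in rule attached to the data-dependent histogram estimate
$\widehat{\eta}_{n}(x)=\nu_{n}(\widetilde{\Omega}_{n}[x]\times\{1\})/\mu_{n}(\widetilde{\Omega}_{n}[x])$ of the regression function $\eta(x)=P(\underline{Y}=1\mid\underline{X}=x)$, where $\mu_{n},\nu_{n}$ are the empirical measures induced by $L$ on $\mathbb{R}^{p}$ and $\mathbb{R}^{p}\times C$, with true counterparts $\mu,\nu$. Indeed $\Phi(L)(x)=I(\widehat{\eta}_{n}(x)>1/2)$, so by the standard plug-in inequality, conditionally on $L$, the excess risk obeys $L(\Phi(L))-L^{*}\le 2\,\mathbb{E}\big[|\widehat{\eta}_{n}(\underline{X})-\eta(\underline{X})|\,\big|\,L\big]$. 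Inserting the cell-averaged regression function $\overline{\eta}_{n}(x)=\nu(\widetilde{\Omega}_{n}[x]\times\{1\})/\mu(\widetilde{\Omega}_{n}[x])$ built from the true distribution on the random partition $\widetilde{\Omega}_{n}$ and using $|\widehat{\eta}_{n}-\eta|\le|\widehat{\eta}_{n}-\overline{\eta}_{n}|+|\overline{\eta}_{n}-\eta|$, the proof reduces to showing, with probability one over the training data, that the bias term $\mathbb{E}|\overline{\eta}_{n}(\underline{X})-\eta(\underline{X})|\to 0$ and the variance term $\mathbb{E}\big[|\widehat{\eta}_{n}(\underline{X})-\overline{\eta}_{n}(\underline{X})|\,\big|\,L\big]\to 0$.

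For the bias term I would use the shrinking-cell condition~(5). Fixing $\gamma,\delta>0$, condition~(5) produces sets $S$ of $\mu$-measure at least $1-\delta$ on which $\mathrm{diam}(\widetilde{\Omega}_{n}[x]\cap S)\le\gamma$ except on a set of $x$ of vanishing $\mu$-measure; together with the $L_{1}(\mu)$-density of uniformly continuous functions (approximate $\eta$ by $\widetilde{\eta}$ with $\mathbb{E}|\eta-\widetilde{\eta}|<\varepsilon$, note $\overline{\eta}_{n}$ is within $\varepsilon$ in $L_{1}$ of the analogous average of $\widetilde{\eta}$, and use uniform continuity plus the small diameters), this is the usual Stone/Lugosi--Nobel argument yielding $\overline{\eta}_{n}\to\eta$ in $L_{1}(\mu)$ as $\gamma,\delta\to 0$ along the conditions. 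The identity $L(\overline{g}_{\widetilde{\Omega}_{n}})-L^{*}=\mathbb{E}\big[|2\eta(\underline{X})-1|\,I(\overline{g}_{\widetilde{\Omega}_{n}}(\underline{X})\ne g^{*}(\underline{X}))\big]\le 2\,\mathbb{E}|\overline{\eta}_{n}-\eta|$ then transfers this into convergence of the idealized partition rule's risk to the Bayes risk.

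The variance term is the crux. It is dominated by a sum over the cells $A$ of $\widetilde{\Omega}_{n}$ of $|\mu_{n}(A)-\mu(A)|$ and $|\nu_{n}(A\times\{1\})-\nu(A\times\{1\})|$, so it suffices to control $\sup_{\pi\in\mathcal{T}_{n}}\sum_{A\in\pi}|\mu_{n}(A)-\mu(A)|$ and its labelled analogue with probability one. The key reduction is combinatorial: $\sum_{A\in\pi}|\mu_{n}(A)-\mu(A)|=2\sup_{A'}|\mu_{n}(A')-\mu(A')|$ over sets $A'$ that are unions of cells of $\pi$, and as $\pi$ ranges over $\mathcal{T}_{n}$ the number of distinct such $A'$ arising on $n$-point samples is at most $\Delta_{n}(\mathcal{T}_{n})\,2^{\lambda(\mathcal{T}_{n})}$; the assumption that all splits are hyperplane splits keeps the underlying set class of finite VC type so this bookkeeping is valid. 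A Vapnik--Chervonenkis / Hoeffding union bound over this effectively finite collection then gives, for each $\varepsilon>0$,
\[
P\Big\{\sup_{\pi\in\mathcal{T}_{n}}\sum_{A\in\pi}|\mu_{n}(A)-\mu(A)|>\varepsilon\Big\}\le c_{1}\,\Delta_{n}(\mathcal{T}_{n})\,2^{\lambda(\mathcal{T}_{n})}\,e^{-c_{2}n\varepsilon^{2}},
\]
whose logarithm, $\log c_{1}+\log\Delta_{n}(\mathcal{T}_{n})+\lambda(\mathcal{T}_{n})\log 2-c_{2}n\varepsilon^{2}$, tends to $-\infty$ by conditions~(3) and~(4); Borel--Cantelli then yields the required almost-sure convergence to $0$, and combining the two terms gives $L(\Phi(L))\to L^{*}$, i.e.\ risk consistency.

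The step I expect to be the main obstacle is precisely the combinatorial reduction in the variance term: replacing the supremum over the uncountable family of data-dependent partitions by a union bound over an effectively finite class while keeping the exact dependence on the maximal node count $\lambda(\mathcal{T}_{n})$ and the growth function $\Delta_{n}(\mathcal{T}_{n})$, so that conditions~(3) and~(4) are exactly what is needed to make the exponential tail summable. The bias step is comparatively routine once condition~(5) is in hand. (As the excerpt notes, the statement is Theorem~2 of~\cite{lugosi1996consistency}; the argument above is the line one fills in.)
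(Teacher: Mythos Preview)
Your proposal is correct and is precisely the Lugosi--Nobel argument: plug-in reduction to $L_{1}$ estimation of $\eta$, bias handled by the shrinking-cell condition~(5) via approximation by uniformly continuous functions, and variance handled by the VC/Hoeffding bound with complexity $\Delta_{n}(\mathcal{T}_{n})\,2^{\lambda(\mathcal{T}_{n})}$ so that conditions~(3) and~(4) make the tail summable for Borel--Cantelli. The paper itself does not supply a proof but simply refers the reader to \cite{lugosi1996consistency}, so your sketch matches (and in fact fills in) the paper's intended argument.
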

\begin{proof}
For the proof of Theorem 1, one may refer to
\cite{lugosi1996consistency}.
\end{proof}
\begin{remark}
Now instead of considering histogram-based partitioning and
classification schemes, we are going to show the risk consistency of
CT as defined above. We can produce a simultaneous result with
conditions (3) and (4) of Theorem 1 replaced by a simple condition.
Though the shrinking cell condition ((5) of Theorem 1) is also
assumed.
\end{remark}

\begin{theorem}
Suppose $(\underline{X},\underline{Y})$ be a random vector in
$\mathbb{R}^{p}\times\ C$ and $L$ be the training set consisting of
$n$ outcomes of $(\underline{X},\underline{Y})$. Let $\Phi$ be a
classification tree scheme such that
\[
\Phi(L)=(\psi_{pl}\circ \lim_{i\to\infty}\phi^{(i)})(L)
\]
where, $\psi_{pl}$ is the plurality rule and
$\phi(L)=(L)_{\tilde{\Omega}_{n}}$ for some $\tilde{\Omega}_{n}\in
\mathcal{T}_{n}$, where
\[
\mathcal{T}_{n}=\{\lim_{i\to\infty}\phi^{(i)}(\ell_{n}):
P(L=\ell_{n})>0\}.
\]
Suppose that all the split function in CT in the question set
associated with $\Phi$ are axis-parallel splits. Finally if for
every n and $w_{i} \in \tilde{\Omega}_{n}$, the induced subset
$L_{w_{i}}$ has cardinality $\geq k_{n}$, where
$\frac{k_{n}}{log(n))}\rightarrow \infty$ and (5) holds true, then
$\Phi$ is risk consistent.
\end{theorem}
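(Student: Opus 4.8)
The plan is to derive Theorem~2 from Theorem~1 by verifying, for the CT scheme $\Phi$, the three regularity conditions (3), (4) and (5). Condition (5) (the shrinking-cell condition) is part of the hypothesis, and axis-parallel splits are a special case of the hyperplane splits required by Theorem~1, so it is enough to check (3) and (4) using only the uniform lower bound $|L_{\omega_{i}}|\ge k_{n}$ on the cells of the terminal partition together with $k_{n}/\log(n)\to\infty$; note the latter forces $k_{n}\to\infty$. By Lemma~1 the limit $\lim_{i\to\infty}\phi^{(i)}(L)$ is attained after at most $|L|=n$ refinements, so $\widetilde{\Omega}_{n}$ is a genuine finite axis-parallel tree partition and $\lambda(\mathcal{T}_{n})$ and $\Delta_{n}(\mathcal{T}_{n})$ are well defined.

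First I would bound the number of cells. The cells of any $\widetilde{\Omega}_{n}\in\mathcal{T}_{n}$ are disjoint and each induced subsample has at least $k_{n}$ points, so $\widetilde{\Omega}_{n}$ has at most $n/k_{n}$ cells; hence $\lambda(\mathcal{T}_{n})\le n/k_{n}$ and $\lambda(\mathcal{T}_{n})/n\le 1/k_{n}\to 0$, which is (3).

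The substantive step is (4). Fix any $n$-point subset of $\mathbb{R}^{p}$. A single-coordinate (axis-parallel) split of this set is determined by a coordinate $j\in\{1,\dots,p\}$ and a threshold lying in one of the at most $n-1$ gaps between the projected points, so there are at most $p(n-1)\le pn$ distinct axis-parallel splits of a fixed $n$-point set. A terminal tree partition with at most $\ell_{n}:=n/k_{n}$ leaves is specified by the shape of a binary tree with at most $\ell_{n}$ leaves (at most $4^{\ell_{n}}$ possibilities, by a Catalan-number bound) together with a choice of one admissible split at each of its at most $\ell_{n}-1$ internal nodes. Therefore
\[
\Delta_{n}(\mathcal{T}_{n})\ \le\ 4^{\ell_{n}}\,(pn)^{\ell_{n}},
\]
so $\log\Delta_{n}(\mathcal{T}_{n})\le \ell_{n}\log(4pn)$ and
\[
\frac{\log\Delta_{n}(\mathcal{T}_{n})}{n}\ \le\ \frac{\log(4pn)}{k_{n}}\ \longrightarrow\ 0,
\]
since $k_{n}/\log(n)\to\infty$ and $\log(4pn)=O(\log n)$. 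This gives (4). With (3), (4) and the assumed (5) all in force, Theorem~1 applies and yields the risk consistency of $\Phi$.

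I expect the main obstacle to be making the combinatorial estimate of the last paragraph fully rigorous for a \emph{data-dependent} partition: although the split used at an internal node is selected by the goodness-of-split criterion $\mathscr{G}$ from the subsample reaching that node, one must argue that it still ranges inside the single uniform family of at most $pn$ axis-parallel splits of the \emph{global} $n$-point set, so that the product count is legitimate; and one must reconcile the minsplit stopping rule (threshold $\alpha$) with the hypothesis $|L_{\omega_{i}}|\ge k_{n}$, i.e.\ check that $\alpha$ can be chosen to grow on the order of $k_{n}$. Once these bookkeeping matters are handled, the reduction to Theorem~1 is routine.
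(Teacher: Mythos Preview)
Your proof is correct and follows the same route as the paper: reduce to Theorem~1 by bounding the number of terminal cells by $n/k_{n}$ to get (3), and then use this cell bound together with a count of admissible splits to control $\Delta_{n}(\mathcal{T}_{n})$ and obtain (4). The only cosmetic difference is in the split count for (4): you use the sharper axis-parallel estimate of at most $pn$ splits (plus a Catalan tree-shape factor), whereas the paper invokes Cover's theorem to bound each split by $n^{p}$, getting $\Delta_{n}(\mathcal{T}_{n})\le (n^{p})^{n/k_{n}}$ and hence $\log\Delta_{n}(\mathcal{T}_{n})/n\le p\log(n)/k_{n}\to 0$; both bounds suffice under $k_{n}/\log n\to\infty$.
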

\begin{proof}
Since $|w_{i}|\geq k_{n} \quad \forall \quad w_{i} \in
\tilde{\Omega}_{n}$, we can write
\begin{equation}
|\tilde{\Omega}_{n}|\leq \frac{n}{k_{n}}
\end{equation}
for every $\tilde{\Omega}_{n} \in \mathcal{T}_{n}$ and in that case,
we will have $\frac{\lambda(\mathcal{T}_{n})}{n} \leq
\frac{1}{k_{n}}$.\\
As $n\rightarrow \infty $, we can see $\frac{1}{k_{n}}\rightarrow 0$
which gives $\frac{\lambda(\mathcal{T}_{n})}{n}\rightarrow 0$. Hence
condition (3) holds true. \\

Now for every $\tilde{\Omega}_{n} \in \mathcal{T}_{n}$, using
Cover's theorem \cite{cover1965geometrical}, any binary split of
$\mathbb{R}^{p}$ can divide $n$ points in $\mathbb{R}^{p}$ in at
most $n^{p}$ ways. Using equation (6), we can write
\[
\Delta_{n}(\mathcal{T}_{n})\leq (n^{p})^{\frac{n}{k_{n}}}
\]
and consequently
\begin{equation}
\frac{log(\Delta_{n}(\mathcal{T}_{n}))}{n} \leq
p\frac{log(n)}{k_{n}}
\end{equation}
As $n\rightarrow \infty$, RHS of equation (7) goes to 0. So
condition (4) of Theorem 1 also holds and hence the theorem.
\end{proof}

\begin{remark}
Note that no assumptions are made on the distribution of the pair
$(\underline{X},\underline{Y}) \in \mathbb{R}^{p}\times C$. Also
sub-linear growth of the number of cells (condition (3)) and
sub-exponential growth of a combinatorial complexity measure
(condition (4)) are not required, instead a more flexible
restriction such as if each cell of $L_{\omega_{i}}$ has cardinality
$\geq k_{n}$ and $\frac{k_{n}}{log(n))}\rightarrow \infty$, then CT
is said to be risk consistent. So, feature selection using CT
algorithm is justified and now we are going to check the importance
of CT output in further model building. It is also noted that the
choice of important features based on CT is a greedy algorithm and
the optimality of local choices of the best feature for a node
doesn't guarantee that the constructed tree will be globally optimal
\cite{kuncheva2004combining}.
\end{remark}

Using CT given features, a list of important features are selected
from $p$ available features. It is noted that CT output also plays
an important role in further modeling. To see the importance of CT
given classification results (to be denoted by $OP$ in rest of the
paper) as a relevant feature, we introduce a non-linear measure of
correlation between any feature
and the actual class levels, namely C-correlation \cite{yu2004efficient}, as follows:\\

\textbf{Definition:} C-correlation is the correlation between any
feature $F_{i}$ and the actual class levels C, denoted by
$SU_{F_{i},C}$. Symmetrical uncertainty (SU)
\cite{press1992numerical} is defined as follows:
\begin{equation}
SU(X,Y)=2\bigg [\frac{H(X)-H(X|Y)}{H(X)+H(Y)} \bigg ]
\end{equation}
where, $H(X)$ is the entropy of a variable $X$ and $H(X|Y)$ is the
entropy of $X$ while $Y$ is observed. We can heuristically decide a
feature to be highly correlated with class $C$ if $SU_{F_{i},C} >
\beta$, where $\beta$ is a relevant threshold to be determined by
users. Using definition we can conclude that $OP$ can be taken as a
non-redundant feature for further model building. This also improves
the performance of the model at a
significant rate, to be shown in Section 4.\\

Now, we build ANN model with CT given features and $OP$ as another
input feature in ANN model. The dimension of input layer in ANN
model, denoted by $d_{m}(\leq{p})$, is the number of important
features obtained by CT + 1. We will use one hidden layer in ANN
model due to the incorporation of $OP$ as an input information in
the model. It should be noted that one-hidden layer neural networks
yield strong universal consistency and there is little theoretical
gain in considering two or more hidden layered neural networks
\cite{devroye2013probabilistic}. In ensemble CT-ANN model, we have
used one hidden layer with $k$ neurons. This makes the proposed
ensemble binary classifier less complex and less time consuming
while implementing the model. Our next objective is to state the
sufficient condition for universal consistency and then finding out
the optimal value of $k$. Before stating the sufficient conditions
for the consistency of the algorithm and optimal number of nodes in
hidden layer for practical use of the model, let us define the followings:\\

\textbf{Definition:} A sigmoid function $\sigma(x)$ is called a
logistic squasher if it is non-decreasing, $\lim_{x\rightarrow
\infty} \sigma(x)=0$ and $\lim_{x \rightarrow -\infty} \sigma(x)=1$
with $\sigma(x)=\frac{1}{1+exp{(-x)}}$.\\

Given $n$ training sequence
$\xi_{n}=\{(Z_{1},Y_{1}),...,(Z_{n},Y_{n})\}$ of $n$ i.i.d copies of
$(\underline{Z},\underline{Y})$ taking values from
$\mathbb{R}^{d_{m}}\times C$, a classification rule realized by a
one-hidden layered neural network is chosen to minimize the
empirical $L_{1}$ risk. Define the $L_{1}$ error of a function $\psi
: \mathbb{R}^{d_{m}}\rightarrow \{0,1\}$ by $J(\psi)=E\{ |\psi(Z)-Y|
\}$.

Consider a neural network with one hidden layer with bounded output
weight having $k$ hidden neurons and let $\sigma$ be a logistic
squasher. Let $\mathscr{F}_{n,k}$ be the class of neural networks
with logistic squasher defined as
\[
\mathscr{F}_{n,k}=\Bigg\{
\sum_{i=1}^{k}c_{i}\sigma(a_{i}^{T}z+b_{i})+c_{0} : k \in
\mathbb{N}, a_{i} \in \mathbb{R}^{d_{m}}, b_{i},c_{i} \in
\mathbb{R}, \sum_{i=0}^{k}|c_{i}|\leq \beta_{n} \Bigg\}
\]

Let $\psi_{n}$ be the function that minimizes the empirical $L_{1}$
error over $\psi_{n} \in \mathscr{F}_{n,k}$. Lugosi and Zeger (1995)
has shown that if $k$ and $\beta_{n}$ satisfy
\[
k \rightarrow \infty , \quad \beta_{n} \rightarrow \infty , \quad
\frac{k \beta_{n}^{4}log(k \beta_{n}^{2})}{n} \rightarrow 0
\]
and there exists $\delta (> 0)$ such that
$\frac{\beta_{n}^{4}}{n^{1-\delta}} \rightarrow 0$, then the
classification rule
\begin{equation}
  g_{n}(z)=\begin{cases}
    0, & \text{if $\psi_{n}(z)\leq 1/2$}.\\
    1, & \text{otherwise}.
  \end{cases}
\end{equation}
is strongly universally consistent \cite{lugosi1995nonparametric}.\\

Alternatively, $J(\psi_{n})-J^{*} \rightarrow 0$ in probability,
where $J(\psi_{n})=E\{|\psi_{n}(Z)-Y| | \xi_{n}\}$ and
$J^{*}=\inf_{\psi_{n}}J(\psi_{n})$ \cite{devroye2013probabilistic}.
Write
\[
J(\psi_{n})-J^{*} = \bigg(  J(\psi_{n})-\inf_{\psi \in
\mathscr{F}_{n,k}} J(\psi) \bigg)  + \bigg ( \inf_{\psi \in
\mathscr{F}_{n,k}} J(\psi) - J^{*} \bigg )
\]
where, $( J(\psi_{n})-\inf_{\psi \in \mathscr{F}_{n,k}} J(\psi) )$
is called estimation error and $( \inf_{\psi \in \mathscr{F}_{n,k}}
J(\psi) - J^{*} )$ is called approximation error. \\

Now, we are going to find out the optimal choice of $k$ using the
regularity conditions of strong universal convergence and the idea
of obtaining upper bounds on the rate of convergence, i.e., how fast
$J(\psi_{n})$ approaches to zero \cite{gyorfi2006distribution}. To
obtain an upper bound on the rate of convergence, we will have to
impose some regularity conditions on the posteriori probabilities.
Though in case of rate of convergence of estimation error, we will
have a distribution-free upper bound \cite{farago1993strong}. And to
obtain the optimal value of $k$, it is enough to find upper bounds
of the estimation and approximation errors. The upper bound of
approximation error investigated by Baron
\cite{barron1993universal}, is given in Lemma 2.

\begin{lemma}
Assume that there is a compact set $E \subset \mathbb{R}^{d_{m}}$
such that $Pr\{Z \in E\}=1$ and the Fourier transform
$\widetilde{P_{0}}(w)$  of $P_{0}(z)$ satisfies
\[
\int_{\mathbb{R}^{d_{m}}} |\omega||\widetilde{P_{0}}(\omega)|d\omega
< \infty
\]
then
\[
\inf_{\psi \in \mathscr{F}_{n,k}} E \bigg ( f(Z,\psi)-P_{0}(Z) \bigg
)^{2} \leq \frac{c}{k},
\]
where c is a constant depending on the distribution.
\end{lemma}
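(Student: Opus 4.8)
The plan is to reproduce Barron's approximation argument \cite{barron1993universal}, which runs in three stages: (i) represent $P_0$, restricted to $E$, as an infinite convex combination (an integral) of sinusoidal ridge functions, using the hypothesis on the Fourier transform; (ii) replace each such ridge function by a convex combination of logistic squashers, keeping the $\ell_1$--norm of the coefficients under control; and (iii) invoke the Maurey--Jones--Barron lemma to pass from the infinite convex combination to a $k$--term element of $\mathscr{F}_{n,k}$ with squared $L_2$ error $O(1/k)$.

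First I would fix $\mu$ to be the law of $\underline{Z}$, so that the ambient Hilbert space is $L_2(\mu)$ with $\|h\|^2 = E\{h(Z)^2\}$ and the quantity to be bounded is $\inf_{\psi\in\mathscr{F}_{n,k}}\|f(\cdot,\psi)-P_0\|_{L_2(\mu)}^2$. Writing the Fourier transform in magnitude--phase form $\widetilde{P_0}(\omega)=e^{i\theta(\omega)}|\widetilde{P_0}(\omega)|$ and applying Fourier inversion, one obtains for $z\in E$
\[
P_0(z)-P_0(0) = C\int_{\mathbb{R}^{d_m}} h_\omega(z)\,\frac{|\omega|\,|\widetilde{P_0}(\omega)|}{C}\,d\omega,\qquad h_\omega(z)=\frac{\cos(\omega^T z+\theta(\omega))-\cos\theta(\omega)}{|\omega|},
\]
where $C=\int_{\mathbb{R}^{d_m}}|\omega|\,|\widetilde{P_0}(\omega)|\,d\omega<\infty$ by hypothesis and $|\omega|\,|\widetilde{P_0}(\omega)|\,d\omega/C$ is a probability measure. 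Since $|\cos a-\cos b|\le|a-b|$, we get $|h_\omega(z)|\le\|z\|\le r:=\sup_{z\in E}\|z\|<\infty$ for $z\in E$, hence $\|h_\omega\|_{L_2(\mu)}\le r$. Therefore $P_0$ lies in the $L_2(\mu)$--closure of the convex hull of $\{P_0(0)\pm C\,h_\omega:\omega\in\mathbb{R}^{d_m}\}$, a set of functions of $L_2(\mu)$--norm at most $|P_0(0)|+Cr$.

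Next I would replace the sinusoidal ridge functions by logistic squashers. For $z\in E$ the scalar $\omega^T z$ ranges over an interval of length at most $2|\omega|r$, on which $t\mapsto\cos(t+\theta(\omega))$ has total variation at most $2|\omega|r$; after the normalization by $1/|\omega|$ built into $h_\omega$, one checks that each $h_\omega$ is, uniformly on $E$, a limit of affine combinations of ridge indicators $\mathbf{1}\{a^T z+b\ge 0\}$ whose coefficients have $\ell_1$--norm bounded by a fixed multiple of $r$, and each such indicator is in turn a pointwise (hence, by dominated convergence, $L_2(\mu)$) limit of rescaled squashers $\sigma(T(a^T z+b))$ as $T\to\infty$, using that $\sigma$ is a logistic squasher. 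Consequently $P_0$ lies in the $L_2(\mu)$--closure of the convex hull of a set $G_\sigma$ of functions of the form $c_0+c\,\sigma(a^T z+b)$ with $\|g\|_{L_2(\mu)}\le b^\ast$ and $|c_0|+|c|\le\kappa$, where $b^\ast$ and $\kappa$ are constants depending only on $r$, $|P_0(0)|$ and $C$ --- i.e.\ only on the distribution of $(\underline{Z},\underline{Y})$.

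Finally I would apply the Maurey--Jones--Barron lemma: if $h$ belongs to the $L_2(\mu)$--closure of the convex hull of a set whose elements have norm at most $b^\ast$, then for every $k\ge 1$ there exist $g_1,\dots,g_k$ in that set with $\big\|h-\tfrac1k\sum_{i=1}^k g_i\big\|_{L_2(\mu)}^2\le (b^\ast)^2/k$. Taking $h=P_0$ and $g_i=c_0^{(i)}+c^{(i)}\sigma(a_i^T z+b_i)\in G_\sigma$, the average $\tfrac1k\sum_i g_i$ has the form $\widetilde c_0+\sum_{i=1}^k\widetilde c_i\,\sigma(a_i^T z+b_i)$ with $|\widetilde c_0|+\sum_{i=1}^k|\widetilde c_i|\le\tfrac1k\sum_{i=1}^k\big(|c_0^{(i)}|+|c^{(i)}|\big)\le\kappa$; since $\beta_n\to\infty$ we have $\kappa\le\beta_n$ for all $n$ large, so this function lies in $\mathscr{F}_{n,k}$ and achieves $E\{(f(Z,\psi)-P_0(Z))^2\}\le (b^\ast)^2/k$, which is the claim with $c=(b^\ast)^2$. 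The main obstacle is the bookkeeping in the middle stage: one must ensure that passing from sinusoidal to sigmoidal ridge functions inflates the $\ell_1$--norm of the output weights only by a distribution--dependent factor that is independent of $k$, for otherwise the resulting network would fall outside $\mathscr{F}_{n,k}$; this is precisely where compactness of $E$ and the logistic--squasher hypothesis on $\sigma$ enter. For the complete computational details one may consult \cite{barron1993universal}.
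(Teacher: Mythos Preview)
The paper does not supply its own proof of this lemma: it is stated and immediately followed by a remark referring the reader to Barron \cite{barron1993universal} for the argument and for discussion of the Fourier--moment condition. Your sketch faithfully reconstructs Barron's original three--stage proof (Fourier representation as an integral of sinusoidal ridge functions, passage to sigmoidal ridge functions via the logistic--squasher property and compactness of $E$, then the Maurey--Jones--Barron $1/k$ bound in $L_2(\mu)$), so you are not taking a different route from the paper --- you are simply writing out the proof the paper chose to cite rather than reproduce. Your handling of the one extra wrinkle not present in Barron's statement, namely that membership in $\mathscr{F}_{n,k}$ requires $\sum_i|c_i|\le\beta_n$, is also correct: the coefficient bound $\kappa$ coming out of the construction depends only on the distribution, so $\beta_n\to\infty$ guarantees $\kappa\le\beta_n$ eventually.
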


\begin{remark}
The previous condition on Fourier transformation and extensive
discussion on the properties of functions satisfying the condition
(including logistic squasher function) are given in the paper of
Baron \cite{barron1993universal}.
\end{remark}

\begin{prop}
For a fixed $d_{m}$, let $\psi_{n} \in \mathscr{F}_{c}$. The neural
network satisfying regularity conditions of strong universal
consistency and if the conditions of Lemma 2 holds, then the optimal
choice of $k$ is $O\bigg( \sqrt{\frac{n}{d_{m}log(n)}} \bigg)$.
\end{prop}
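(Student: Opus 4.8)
The plan is to determine $k$ by balancing, as a function of $k$ (with $d_{m}$ and $n$ held fixed), the two terms of the decomposition
$J(\psi_{n})-J^{*}=\big(J(\psi_{n})-\inf_{\psi\in\mathscr{F}_{n,k}}J(\psi)\big)+\big(\inf_{\psi\in\mathscr{F}_{n,k}}J(\psi)-J^{*}\big)$
displayed above, namely the estimation error and the approximation error. First I would bound the approximation error. Lemma 2 gives $\inf_{\psi\in\mathscr{F}_{n,k}}E\big(f(Z,\psi)-P_{0}(Z)\big)^{2}\le c/k$; combining this with the elementary inequality $E|W|\le(EW^{2})^{1/2}$ applied to $W=\psi(Z)-P_{0}(Z)$, together with the standard relation controlling the $L_{1}$ classification excess risk by the $L_{1}$ regression error, the approximation error is seen to be of order $c_{1}k^{-1/2}$, where $c_{1}$ depends only on the distribution.

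Next I would bound the estimation error in a distribution-free manner. The class $\mathscr{F}_{n,k}$ (equivalently $\mathscr{F}_{c}$, with the output-weight budget kept at a constant) is a one-hidden-layer network of $k$ logistic units on $\mathbb{R}^{d_{m}}$, hence has $\Theta(k d_{m})$ free parameters and VC (pseudo-)dimension of order $k d_{m}$ up to logarithmic factors. The usual uniform-deviation argument for the empirical-$L_{1}$-risk minimizer $\psi_{n}$ (Farago--Lugosi, Lugosi--Zeger) then yields
$E\big[J(\psi_{n})-\inf_{\psi\in\mathscr{F}_{n,k}}J(\psi)\big]\le c_{2}\sqrt{k d_{m}\log(n)/n}$
for a universal constant $c_{2}$, which is precisely the distribution-free upper bound referred to just before the statement.

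Adding the two bounds, $J(\psi_{n})-J^{*}\le c_{1}k^{-1/2}+c_{2}\sqrt{k d_{m}\log(n)/n}$. Writing the right-hand side as $g(k)=c_{1}k^{-1/2}+c_{2}\sqrt{d_{m}\log(n)/n}\;k^{1/2}$ and minimizing over $k$ treated as continuous, $g'(k)=0$ gives $c_{1}k^{-1}=c_{2}\sqrt{d_{m}\log(n)/n}$, i.e. $k^{*}=(c_{1}/c_{2})\sqrt{n/(d_{m}\log(n))}$; rounding to the nearest integer only changes constants. Hence the optimal order is $k=O\big(\sqrt{n/(d_{m}\log(n))}\big)$. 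I would close by checking admissibility: with $k^{*}$ of this order, $k^{*}\to\infty$ and $k^{*}/n\to0$, so (with $\beta_{n}$ constant or slowly growing) all the regularity conditions for strong universal consistency stated above remain in force, and the classifier built with $k^{*}$ neurons is still consistent.

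The step needing the most care is the estimation-error bound: one must justify that the effective combinatorial complexity of $\mathscr{F}_{n,k}$ is of order $k d_{m}$ (Baum--Haussler-type VC bounds for sigmoidal networks, absorbing the extra $\log$ factor) and that empirical risk minimization over this class loses only $O(\sqrt{k d_{m}\log(n)/n})$ relative to the best function in $\mathscr{F}_{n,k}$. The $L_{1}$-versus-$L_{2}$ passage in the approximation term and the admissibility verification for $k^{*}$ are comparatively routine.
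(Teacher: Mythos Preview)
Your proposal is correct and follows essentially the same route as the paper: bound the approximation error by $O(k^{-1/2})$ via Cauchy--Schwarz applied to Lemma~2 together with the inequality $J(\psi)-J^{*}\le 2E|f(Z,\psi)-P_{0}(Z)|$, bound the estimation error by $O(\sqrt{kd_{m}\log(n)/n})$ via the Farag\'o--Lugosi result, and balance the two terms. The only cosmetic differences are that the paper simply equates the two bounds rather than differentiating $g(k)$, and omits your admissibility check; otherwise the arguments coincide.
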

\begin{proof}
Applying Cauchy-Schwarz inequality in lemma 2, we can write
\[
\inf_{\psi \in \mathscr{F}_{n,k}} E\bigg | f(Z,\psi)-P_{0}(Z)  \bigg
| = O \bigg ( \frac{1}{\sqrt{k}} \bigg)
\]
It is well known \cite{devroyegy} that for any $\psi$
\[
J(\psi)-J^{*}\leq 2E\bigg | f(Z,\psi)-P_{0}(Z)  \bigg |
\]
So, the upper bound of approximation error is found to be $O \bigg (
\frac{1}{\sqrt{k}} \bigg)$. \\Though the approximation error goes to
zero as the number of neurons goes to infinity for strongly
universally consistent classifier. But in practical implementation
number of neurons is often fixed (eg., can't be increased with the
size of the training sample grows). Now, it is enough to bound the
estimation error.

Let us define $r(\psi_{n})= E (J(\psi_{n}))= P(\psi_{n}(Z)\neq Y)$
is the average error probability of $\psi_{n}$. Using lemma 3 of
\cite{farago1993strong}, we can write that the estimation error is
always $O \bigg( \sqrt{\frac{kd_{m}log(n)}{n}}\bigg)$. \\Bringing
the above facts together, we can write
\[
r(\psi_{n})-J^{*}=O \bigg( \sqrt{\frac{kd_{m}log(n)}{n}} +
\frac{1}{\sqrt{k}} \bigg)
\]
Now, to find optimal value of $k$, the problem reduces to equating
$\sqrt{\frac{kd_{m}log(n)}{n}}$ with $\frac{1}{\sqrt{k}}$, which
gives $k=O\bigg( \sqrt{\frac{n}{d_{m}log(n)}} \bigg)$.
\end{proof}

\begin{remark}
We can see a remarkable property of ensemble CT-ANN model from
Proposition 1. Since for this class of posteriori probability
function as shown in Lemma 2, the rate of convergence does not
necessarily depend on the dimension, in the sense that the exponent
is constant. Thus, it can be concluded that the proposed model will
not suffer from the curse of dimensionality.
\end{remark}

The optimal value of hidden nodes is found to be $O\bigg(
\sqrt{\frac{n}{d_{m}log(n)}} \bigg)$ in the ensemble CT-ANN model.
For practical use, if the data set is small, the recommendation is
to use $\bigg( \sqrt{\frac{n}{d_{m}log(n)}} \bigg)$ for achieving
utmost accuracy of the proposed model. Since the proposed ensemble
classifier possesses the desirable statistical properties, the
classifier can be called robust. The practical usefulness and
competitiveness of the proposed classifier in solving real life
feature selection cum classification problems will be shown in
Section 4.

\section{Experimental Evaluation}  \label{Experimental Evaluation}
\subsection{\textbf{The datasets}}

The proposed model is evaluated using six publicly available medical
data sets from Kaggle\footnote{https://www.kaggle.com/datasets} and
UCI Machine Learning
repository\footnote{https://archive.ics.uci.edu/ml/datasets.html}
dealing with various diseases like breast cancer, pima diabetes,
heart disease, promoter gene sequences, SPECT heart images, etc.
These binary classification data sets have limited number of
observations and high-dimensional feature spaces. Breast cancer data
set has 9 discrete features where as pima diabetes data set consists
of 8 continuous features in the input space
\cite{rodriguez2006rotation}. Heart disease data set originally
contained a total of 303 examples for 13 continuous features, out of
which 6 contained missing class values and 27 are disputed examples
which were removed from the data set. Promoter gene sequences data
set has 57 sequential DNA nucleotides attributes. SPECT images data
set is represented by 22 binary features that have either 0 or 1
values, but the data set is imbalanced in nature. Wisconsin breast
cancer data set consists of 699 examples carrying 9 continuous
features in the input space \cite{kurgan2001knowledge}. Table 1
gives a summary about these data sets.

\begin{table}[H]
\tiny \centering \caption{Characteristics of the data sets used in
experimental evaluation}
    \begin{tabular}{cccccc}
        \hline
        Dataset                  & Classes    & Objects & Number of      & Number of          & Number of          \\
                                 &            & $(n)$   & feature $(p)$  & $(+)$ve instances  & $(-)$ve instances  \\ \hline
        breast cancer            & 2          & 286     & 9              & 85                 & 201                \\
        heart disease            & 2          & 270     & 13             & 120                & 150                \\
        pima diabetes            & 2          & 768     & 8              & 500                & 268                \\
        promoter gene sequences  & 2          & 106     & 57             & 53                 & 53                 \\
        SPECT heart images       & 2          & 267     & 22             & 55                 & 212                \\
        wisconsin breast cancer  & 2          & 699     & 9              & 458                & 241                \\
        \hline
    \end{tabular}

\end{table}

\subsection{\textbf{Performance measures}}

The performance evaluation measures used in our experimental
analysis are based on the confusion matrix. Higher the value of
performance metrics, the better the classifier is. The
expressions for different performance measures as follows: \\
Classification Accuracy = $\frac{(TP+TN)}{(TP+TN+FP+FN)}$; F-measure
=$2\frac{\big(\mbox{Precision}\times\mbox{Recall}\big)}{\big(\mbox{Precision}+\mbox{Recall}\big)}$;\\
where, Precision = $\frac{TP}{TP+FP}$; Recall =
$\frac{TP}{TP+FN}$; and \\
TP (True Positive): correct positive prediction; FP (False
Positive): incorrect positive prediction; TN (True Negative):
correct negative prediction; FN (False Negative): incorrect negative
prediction.

\subsection{\textbf{Analysis of Results}}

In order to show the impact of the proposed 2-step pipeline model,
it is applied to the high-dimensional small or medium sized medical
data sets. These are such types of data sets in which not only
classification is the task but also feature selection plays a vital
role before it. We shuffled the observations in each of the 6
medical data sets randomly and split it into training, validation
and test data sets in a ration of 50 : 25 : 25. We have also
repeated each of the experiments 10 times with different randomly
assigned training, validation and testing data sets.

Our proposed algorithm is compared with Classification Tree (CT),
Random Forest (RF), Support Vector Machine (SVM), Artificial Neural
Network (ANN) with 1HL and 2HL, Entropy Nets, Tsai Neural Tree (NT)
\cite{tsai2012decision}, Deep Neural Decision Trees (DNDT)
\cite{yang2018deep} based on the different performance metrics. All
these classifiers are implemented in R Statistical software on a PC
with 2.1GHz processor with 8GB memory other than DNDT. We compared
the proposed model with 1-HL ANN and 2-HL ANN without employing
feature selection. Since the data sets are small and medium sized,
going beyond 2HL ANN will over-fit the data set
\cite{devroye2013probabilistic} and this is also reminiscent of
universal approximation theorem \cite{hornik1989multilayer}. For 1HL
ANN, number of hidden neurons used is $k \thickapprox \sqrt{n}$
\cite{devroye2013probabilistic} and for 2HL ANN, 2/3 of the input
sizes are taken as the number of neurons in the 1st HL and 1/3 of
the input sizes in case of 2nd HL \cite{zhou2002ensembling}. In a
similar way Tsai Neural Tree (NT) were also built and the accuracy
levels were compared. DNDT searches tree structure and parameter
with stochastic gradient descent which was implemented in TensorFlow
\cite{abadi2016tensorflow}, and it is a kind of GPU-accelerated
computing \cite{yang2018deep}. Breiman's random forest
\cite{breiman2001random} also has an in-built feature selection
mechanism which was implemented using \textit{party} implementation
in R and results are reported in Table 2.

To apply our proposed model to the medical data sets, we first apply
CT with minsplit (as defined in Section 3) as 10\% of the training
sample size using the \textit{rpart} package implementation in R. CT
model uses gini index of diversity with the available input feature
space. The variable importance indicator $C_{p}$ was used for
selection of variables to enter or leave CT model. Based on the
results of CT, important variables or features were chosen in the
final model along with CT output. The number of reduced features
after feature selection using CT are reported in Table 2. The number
of hidden neurons in the hidden layer is calculated using this
formula $k=\sqrt{\frac{n}{d_{m}log(n)}}$, where $n$ is the number of
training samples and $d_{m}$ as the number of input features in
neural networks. We have further normalized the data sets before
training the neural network. Min-max method is used for scaling the
data in an interval of $[0,1]$. For small or medium data sets, our
model recommends using the upper bound of the number of neurons in
the HL of the ensemble model. The ensemble CT-ANN model is trained
using \textit{neuralnet} implementation in R. Training time and
memory requirement for our proposed model is quite low as compared
with DNDT which needs availability of GPU. Table 2 gives the
obtained results from different classifiers used for experimental
evaluation over 6 medical data sets. We can conclude from Table 2
that the proposed model achieves overall highest accuracy while
working with reduced features as compared to other state-of-the-arts
for most of the data sets and remains competitive for other few data
sets as well.

\begin{table}[H]
\tiny \centering \caption{Results (and their standard deviation) of
classification algorithms over 6 medical data sets}
    \begin{tabular}{ccccc}
        \hline
        Classifiers           & Data set                  & The number of (reduced)   & Classification       & F-measure          \\
                              &                           & features after            & accuracy             &                    \\
                              &                           & feature selection         & (\%)                 &                    \\
                              \hline
        \multirow{6}{*}{CT}  & breast cancer              & 7                         & 68.26 (6.40)         & 0.70 (0.07)        \\
                              & heart disease             & 7                         & 76.50 (4.50)         & 0.81 (0.03)        \\
                              & pima diabetes             & 6                         & 71.85 (4.94)         & 0.74 (0.03)        \\
                              & promoter gene sequences   & 17                        & 69.43 (2.78)         & 0.73 (0.01)        \\
                              & SPECT heart images        & 9                         & 75.70 (1.56)         & 0.78 (0.00)        \\
                              & wisconsin breast cancer   & 8                         & 94.20 (2.98)         & 0.89 (0.01)        \\
                              \hline
        \multirow{6}{*}{RF}   & breast cancer             & 6                         & 69.00 (7.30)         & 0.72 (0.07)        \\
                              & heart disease             & 8                         & 80.19 (4.23)         & 0.84 (0.01)        \\
                              & pima diabetes             & 6                         & 73.49 (4.12)         & 0.76 (0.03)        \\
                              & promoter gene sequences   & 20                        & 71.26 (1.97)         & 0.75 (0.03)        \\
                              & SPECT heart images        & 10                        & 79.70 (1.23)         & 0.82 (0.01)        \\
                              & wisconsin breast cancer   & 8                         & 95.75 (2.01)         & 0.96 (0.02)        \\
                              \hline
        \multirow{6}{*}{SVM}  & breast cancer             & 9                         & 64.62 (5.21)         & 0.68 (0.05)        \\
                              & heart disease             & 13                        & 78.95 (4.89)         & 0.83 (0.01)        \\
                              & pima diabetes             & 8                         & 70.39 (3.56)         & 0.72 (0.03)        \\
                              & promoter gene sequences   & 57                        & 59.35 (1.37)         & 0.63 (0.02)        \\
                              & SPECT heart images        & 22                        & \textbf{83.46} (1.29)& \textbf{0.85} (0.00)\\
                              & wisconsin breast cancer   & 9                         & 93.30 (2.78)         & 0.94 (0.01)        \\
                              \hline
        \multirow{6}{*}{ANN (with 1HL)} & breast cancer   & 9                         & 61.58 (5.89)         & 0.64 (0.04)        \\
                              & heart disease             & 13                        & 73.56 (5.44)         & 0.79 (0.02)        \\
                              & pima diabetes             & 8                         & 66.78 (4.58)         & 0.69 (0.04)        \\
                              & promoter gene sequences   & 57                        & 61.77 (3.46)         & 0.65 (0.02)        \\
                              & SPECT heart images        & 22                        & 79.69 (0.23)         & 0.81 (0.01)        \\
                              & wisconsin breast cancer   & 9                         & 94.80 (2.01)         & 0.96 (0.01)        \\
                              \hline
        \multirow{6}{*}{ANN (with 2HL)} & breast cancer   & 9                         & 62.20 (5.12)         & 0.64 (0.03)        \\
                              & heart disease             & 13                        & 78.81 (3.96)         & 0.82 (0.03)       \\
                              & pima diabetes             & 8                         & 69.78 (3.89)         & 0.73 (0.02)        \\
                              & promoter gene sequences   & 57                        & 63.46 (2.19)         & 0.68 (0.02)        \\
                              & SPECT heart images        & 22                        & 82.71 (0.78)         & 0.84 (0.01)        \\
                              & wisconsin breast cancer   & 9                         & 95.60 (2.54)         & 0.96 (0.10)        \\
                              \hline
        \multirow{6}{*}{Entropy Nets} & breast cancer     & 7                         & 69.00 (6.25)         & 0.72 (0.05)        \\
                              & heart disease             & 7                         & 79.59 (4.78)         & 0.83 (0.01)        \\
                              & pima diabetes             & 6                         & 69.50 (4.05)         & 0.72 (0.02)        \\
                              & promoter gene sequences   & 17                        & 66.23 (1.98)         & 0.70 (0.01)        \\
                              & SPECT heart images        & 9                         & 76.64 (1.70)         & 0.78 (0.01)        \\
                              & wisconsin breast cancer   & 8                         & 95.96 (2.18)         & 0.96 (0.00)        \\
                              \hline
        \multirow{6}{*}{TSai NT} & breast cancer          & 7                         & 69.45 (7.17)         & 0.71 (0.07)        \\
                              & heart disease             & 7                         & 80.25 (4.68)         & 0.85 (0.01)        \\
                              & pima diabetes             & 6                         & 71.59 (4.19)         & 0.74 (0.03)        \\
                              & promoter gene sequences   & 17                        & 70.67 (2.83)         & 0.74 (0.02)        \\
                              & SPECT heart images        & 9                         & 76.95 (1.27)         & 0.78 (0.01)        \\
                              & wisconsin breast cancer   & 8                         & \textbf{97.40} (2.11)& \textbf{0.98} (0.01)\\
                              \hline
        \multirow{6}{*}{DNDT}& breast cancer              & 8                         & 66.12 (7.81)         & 0.68 (0.08)        \\
                              & heart disease             & 7                         & 81.05 (3.89)         & 0.86 (0.02)        \\
                              & pima diabetes             & 6                         & 69.21 (5.08)         & 0.72 (0.05)        \\
                              & promoter gene sequences   & 17                        & 69.06 (1.75)         & 0.71 (0.01)        \\
                              & SPECT heart images        & 10                        & 75.50 (0.89)         & 0.77 (0.00)        \\
                              & wisconsin breast cancer   & 7                         & 94.25 (2.14)         & 0.95 (0.00)        \\
                              \hline
        \multirow{6}{*}{Proposed Model} & breast cancer   & 7                         & \textbf{72.80} (6.54)& \textbf{0.77} (0.06)\\
                              & heart disease             & 7                         & \textbf{82.78} (4.78)& \textbf{0.89} (0.02)\\
                              & pima diabetes             & 6                         & \textbf{76.10} (4.45)& \textbf{0.79} (0.04)\\
                              & promoter gene sequences   & 17                        & \textbf{75.40} (1.50)& \textbf{0.79} (0.01)\\
                              & SPECT heart images        & 9                         & 81.03 (0.56)         & 0.82 (0.00)        \\
                              & wisconsin breast cancer   & 8                         & 97.30 (1.05)         & 0.98 (0.00)        \\
        \hline
    \end{tabular}
\end{table}

\section{Conclusion and Discussions} \label{conclusions}
In this paper, a novel nonparametric ensemble classifier is proposed
to achieve higher accuracy in classification performance with very
little computational cost (by working with a subset of input
features). Our proposed feature selection cum classification model
is robust in nature. Ensemble CT-ANN model is shown to be
universally consistent and less time consuming during the actual
implementation. We have also found the optimal value of the number
of neurons in the hidden layer so that the user will have less
tuning parameters to be controlled. The proposed model when applied
to real life data sets performed better compared to other
state-of-the-art models for most of the data sets and remained
competitive for the few other data sets. Situations when feature
selection is not a job in classification problems, our model may not
be too effective. But the ensemble classifier will have an edge
where the data analysis requires important variable selections in
the early stage followed by predictions using classifiers for
limited data sets. In the light of current advances in ANN, one
might ask a simple question : What is the need of a two-step
pipeline (like ensemble CT-ANN model) over advanced ANN models ?

A straight-cut answer to this question could be unwise. The primary
goal of 'statistics' is to make scientific inferences from the model
compared to building a ``black-box-like" model which may perform
well for some specific data sets, but may not be considered as a
general theory \cite{dunson2018statistics}. Our proposed model is
robust, universally consistent, easily interpretable and highly
useful for high dimensional small or medium sized data sets (for
example, medical data sets) to perform feature selection cum
classification task. Advanced ANN models (say, deep neural net) are
highly complex, over-parameterized models and found useful when the
data sets are very large (like image, audio and video data sets)
\cite{dunson2018statistics}. Nevertheless, no model can have
dominant advantage and one may also refer to \textit{no free lunch
theorems} \cite{wolpert1996lack}. Normally, for every new finding
there will always be a trade-off between accuracy, interpretability
and complexity of the model \cite{wolpert1996lack}. There are many
future scope of research of this paper. One scope is to extend the
model for multi-class classification problems. Another interesting
area for research is to improve the ensemble model especially for
imbalanced data sets.

\section*{Acknowledgements}
The authors are grateful to the editors and anonymous referees for
careful reading, constructive comments and insightful suggestions,
which have greatly improved the quality of the paper.

\bibliographystyle{elsarticle-num}
\bibliography{bibliography}

\end{document}